\DeclareMathOperator{\conv}{Conv}
\newcommand{\R}{\ensuremath{\mathbb{R}}}
\theoremstyle{plain}
    \newtheorem{theorem}{Theorem}
    \newtheorem{assertion}[theorem]{Assertion}
    \newtheorem{lemma}[theorem]{Lemma}
    \newtheorem{remark}[theorem]{Remark}
    \newtheorem*{remark*}{Remark}
\renewcommand{\leq}{\leqslant}
\renewcommand{\le}{\leqslant}
\title{An example of an “unlinked” set of $2k + 3$ points in $2k$-space\footnote{We are grateful to S. Dzhenzher and A. Skopenkov for helpful comments and to M.Tancer for a draft proof of Lemma~\ref{l:mcsi}.}}
\author{Starkov Mikhail}
\date{}
\begin{document}
\maketitle
\begin{abstract}
Take any $d + 3$ points in $\mathbb{R}^d$. It is known that 

(a) if $d = 2k + 1$, then there are two linked $(k + 1)$-simplices with the vertices at these points; 

(b) if $d = 2k$, then there are two disjoint $(k + 1)$-tuples of these points such that their convex hulls intersect.

The analogue of (b) for $d = 2k + 1$, which is also the analogue of (a) for intersections (instead of linkings), states that there are two disjoint $(k + 1)$- and $(k + 2)$-tuples of these points such that their convex hulls intersect. This analogue is correct by (a).

In this paper we disprove the analogue of (a) for $d = 2k$, which is also the analogue of (b) for linkings (instead of intersections).
\end{abstract}
\bigskip
In this paper we present a result (Assertion~\ref{a:main}) on parity of number of intersections for system of points in $\R^d$. 
Let us describe the motivation at first.

Points in $\R^d$ are in \emph{general position} if no $d + 1$ of them lie in the same $(d - 1)$-hyperplane. A pair of $k$-simplex and $(d - k)$-simplex in $\R^d$ is called \emph{linked} if the $k$-simplex intersects the boundary of the $(d - k)$-simplex at an odd number of points.

The following theorem is well known (see survey \cite[Theorem 1.6]{Sk22}).

\begin{theorem}\label{link1}
Take any $d + 3$ points in general position in $\R^d$.

(a)  If $d = 2k + 1$, then there are two linked $(k + 1)$-simplices with the vertices at these points. 
    
(b)  If $d = 2k$, then there are two disjoint $(k + 1)$-tuples of these points such that their convex hulls intersect.
\end{theorem}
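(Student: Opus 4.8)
The plan is to prove both parts at once by attaching to the configuration a single \emph{mod-$2$ counting invariant} and showing it is odd. Write the points as $v_1,\dots,v_{d+3}$. In case (b) ($d=2k$), for every unordered pair $\{A,B\}$ of disjoint $(k+1)$-subsets of $\{1,\dots,2k+3\}$ (there is always exactly one leftover index, as $2(k+1)=2k+2<2k+3$), the hulls $\conv(A)$ and $\conv(B)$ are $k$-simplices spanning two $k$-flats of $\R^{2k}$, which by general position meet in a single point; set $I(A,B)\in\{0,1\}$ according to whether that point lies in both simplices. In case (a) ($d=2k+1$), for every splitting of $\{1,\dots,2k+4\}$ into two $(k+2)$-subsets $A,B$ I record the mod-$2$ linking number $L(A,B)$, which is the parity of $|\conv(A)\cap\partial\conv(B)|$; since $\partial\conv(B)$ is the union of its $k$-facets $F$, this is $\sum_F[\,\conv(A)\cap F\neq\emptyset\,]\pmod 2$, each term being whether a $(k+1)$-flat and a $k$-flat of $\R^{2k+1}$ meet inside both pieces. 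The theorem follows once I show that $\sum_{\{A,B\}}I(A,B)$ and $\sum_{\{A,B\}}L(A,B)$ are both odd: an odd sum forces some summand to be nonzero, yielding the required intersecting pair in (b) and the required linked pair in (a).

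The key step is that each sum is \emph{independent of the placement of the points}, provided they stay in general position. I would prove this by joining the given configuration to a convenient model by a generic path of configurations. Along such a path general position fails only at finitely many isolated times, each a single elementary event in which one point crosses the affine hull of $d$ of the others. At each event I would check that the individual summands change only in matched pairs: a combinatorial intersection between $\conv(A)$ and $\conv(B)$ that is created or destroyed is accompanied by a companion change in a paired term, so the total parity is preserved. This is exactly the assertion that the sum is the $\Z/2$ intersection (respectively linking) number of the induced antipodal map on the deleted product of the relevant skeleton of a simplex --- the $k$-skeleton of the $(2k+2)$-simplex for (b), and disjoint $(k+1)$-faces of the $(2k+3)$-simplex for (a) --- and such a number is a homotopy invariant; the elementary-event bookkeeping is the hands-on form of that invariance.

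It then remains to evaluate the invariant on one model. I would put the points on the moment curve $\gamma(t)=(t,t^2,\dots,t^d)$ at parameters $t_1<\dots<t_{d+3}$, so the configuration is a cyclic polytope and all affine dependences are controlled by Gale's evenness/alternation condition; the disjoint pairs whose simplices meet are then the ``alternating'' ones and can be enumerated explicitly, and I expect the count to come out odd (for $d=2k$ this is the van Kampen--Flores count, and for $d=2k+1$ its one-dimension-higher linking analogue). As a sanity check, for $k=1$, $d=2$ the sum $\sum I(A,B)$ equals the number of $4$-point subsets of the five points in convex position, which for five points in general position is always odd (it is $5$, $3$, or $1$), matching the claim.

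The \textbf{main obstacle} is the invariance step: organizing the elementary events correctly and verifying the pairwise cancellation, in particular identifying which term absorbs the change at each crossing. The linking case (a) carries the extra subtlety that $\partial\conv(B)$ is assembled from $k+2$ facets, so the event analysis must be run facet by facet and then summed. If this direct argument proves too delicate to carry out cleanly, the safe fallback is to quote the van Kampen--Flores theorem for (b) and its classical linking analogue for (a), which are precisely the statements that these two invariants are nonzero for every generic placement of the vertices, and in particular for the affine placement given by $v_1,\dots,v_{d+3}$.
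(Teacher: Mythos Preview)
The paper does not prove Theorem~\ref{link1} at all: it is stated as ``well known'' with a reference to the survey \cite[Theorem~1.6]{Sk22}, and no argument is given. So there is nothing to compare your proof against; your fallback of simply quoting van~Kampen--Flores for (b) and its linking analogue (Conway--Gordon--Sachs in the classical case) for (a) is exactly what the paper does.

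That said, the strategy you outline is the standard one and is correct in spirit: define the mod-$2$ total intersection (resp.\ linking) number, show it is invariant along a generic path of configurations by analyzing the finitely many wall-crossings, and evaluate it on a convenient model. Two remarks. First, the invariance step you flag as the main obstacle is real work; the clean way to package it is via the $\Z/2$ van~Kampen obstruction on the deleted product, which is what you allude to. Second, be a little careful with the moment-curve computation: the paper itself uses the moment curve in $\R^{2k}$ to produce a configuration with \emph{no} linked $k$/$(k{+}1)$-simplex pairs (Assertion~\ref{a:main} via Lemma~\ref{a:comb}), so the ``alternation'' combinatorics there computes a different, and even, quantity. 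The count you need for (b) is over unordered disjoint pairs of $(k{+}1)$-subsets of $[2k+3]$ that alternate; your $k=1$ sanity check (the number of convex $4$-point subsets among five generic planar points is always odd) is exactly the right verification in low dimension.
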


The analogue of (b) for $d = 2k + 1$, which is also the analogue of (a) for intersections (instead of linkings), states that there are two disjoint $(k + 1)$- and $(k + 2)$-tuples of these points such that their convex hulls intersect.
This analogue is correct by (a).

In this paper we disprove the analogue of (a) for $d = 2k$, which is also the analogue of (b) for linkings (instead of intersections).
\footnote{This was formulated without a proof in 2015 \cite[arXiv version 3, Proposition 5.1.a]{Sk22} and explicitly repeated as a conjecture in 2018 \cite[arXiv version 4, Remark~2.9, item~(4-3)]{Sk22} (along with the info on M. Tancer's proof of the PL version).}

\begin{assertion}\label{a:main}
There are $2k + 3$ points in general position in $\R^{2k}$ such that any $k$-simplex formed by $k + 1$ of them intersects the boundary of the simplex formed by the remaining $k + 2$ points at an even number of points.
\end{assertion}
For $k = 1$ one can choose as an example the $5$ vertices of a convex $5$-polygon. However, it is not easy to generalize this construction to higher  dimensions, so we use a different one. 
Recall that $[n] := \{1,2,\ldots,n\}$ for a positive integer $n$.
The \emph{moment curve} in $\R^d$ is a map $m \colon \R \to \R^d$ given by the formula $m(t) = (t, t^2,\ldots, t^d)$.
We use $m([2k + 3])$ as an example to Assertion~\ref{a:main}.
Below we reduce Assertion~\ref{a:main} to the purely combinatorial Lemma~\ref{a:comb}.

\begin{remark}\label{a:extra}
For any $2k + 3$ points in general position in $\R^{2k}$ the number of $k$-simplices formed by $k+1$ of them, that intersect the surface of the $(k + 1)$-simplex  formed by the remaining $k + 2$ points exactly at one point, is even.
\end{remark}
Remark~\ref{a:extra} means that the number of linked pairs of $k$-simplex and $(k + 1)$-simplex is even. 
Assertion~\ref{a:main} claims that this number can be zero.

Lemmas~\ref{corr},~\ref{l:mcl},~\ref{l:mcsi} are well known (see \cite{Br} for Lemma~\ref{l:mcsi}). For the reader's convenience, we present the proofs.

For a set $X$ and a positive integer $s$ denote by ${X \choose s}$ the set of $s$-element subsets of $X$.
Recall that $\conv{X}$ is the convex hull of a finite subset $X \subset \R^d$. In other words, $\conv{X}$ is a simplex formed by the points from $X$.
\begin{lemma}\label{corr}
    For any $d + 3$ points in general position in $\R^d$ any $k$-simplex formed by $k + 1$ of these points intersects the $(d - k)$-simplex formed by $d + 1 - k$ of the remaining $d + 2 - k$ points in at most one point.
\end{lemma}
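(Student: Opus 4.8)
The plan is to reduce the claim to a statement about affine hulls and then invoke the standard dimension formula for the intersection of two flats. Denote by $A$ the $(k+1)$-element set spanning the $k$-simplex and by $B$ the $(d+1-k)$-element set spanning the $(d-k)$-simplex. These sets are disjoint, so $A \cup B$ consists of exactly $(k+1)+(d+1-k)=d+2$ of the given points. Since $\conv A \subseteq \operatorname{aff} A$ and $\conv B \subseteq \operatorname{aff} B$, it suffices to show that the affine hulls $\operatorname{aff} A$ and $\operatorname{aff} B$ meet in at most one point.

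First I would read off the relevant dimensions from general position. The assumption that no $d+1$ of the points lie in a common $(d-1)$-hyperplane says precisely that every $d+1$ of them are affinely independent. Hence $A$ spans a flat of dimension $|A|-1=k$, $B$ spans a flat of dimension $|B|-1=d-k$, and the $d+2$ points of $A\cup B$ span all of $\R^d$, since already any $d+1$ of them do. Writing $\vee$ for the affine join, this yields $\operatorname{aff} A \vee \operatorname{aff} B=\operatorname{aff}(A\cup B)=\R^d$.

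Then I would apply the dimension formula. If $\operatorname{aff} A \cap \operatorname{aff} B=\emptyset$, the intersection of the simplices is empty and we are done. Otherwise the two flats meet, and
\[
\dim(\operatorname{aff} A \cap \operatorname{aff} B)=\dim\operatorname{aff} A+\dim\operatorname{aff} B-\dim(\operatorname{aff} A \vee \operatorname{aff} B)=k+(d-k)-d=0,
\]
so they meet in a single point. As $\conv A$ and $\conv B$ lie inside these flats, their intersection is contained in that single point, which proves the claim.

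The computation itself is routine; the only thing requiring care is the bookkeeping of dimensions under general position — in particular verifying that $A\cup B$ genuinely spans $\R^d$, so that the join appearing in the dimension formula is the whole space, and applying the formula only in the nonempty-intersection case, the empty case being trivial. Beyond this I anticipate no genuine obstacle.
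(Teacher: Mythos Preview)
Your argument is correct and follows essentially the same route as the paper: pass from the simplices to their affine hulls and apply the dimension formula, using general position to compute $\dim\operatorname{aff}A$, $\dim\operatorname{aff}B$, and $\dim\operatorname{aff}(A\cup B)$. Your explicit separation of the empty-intersection case is a slight refinement over the paper's presentation, but the idea is identical.
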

\begin{proof}
Take any disjoint $M \in {\R^d\choose k + 1}$, $N \in {\R^d \choose d + 1 - k}$ such that the points from $M \cup N$ are in general position.
We have $$d = \dim{\langle M \cup N \rangle} = \dim[\langle M \rangle + \langle N \rangle] = \dim \langle M \rangle + \dim \langle N \rangle - \dim [\langle M \rangle \cap \langle N \rangle] = d - \dim[\langle M \rangle \cap \langle N \rangle].$$ We obtain $\dim[\langle M \rangle \cap \langle N \rangle] = 0$ which means that $|\conv{M} \cap \conv{N}| \leq 1$.
\end{proof}

\begin{remark*}
    By Lemma~\ref{corr} it follows that for any $2k + 3$ points in general position in $\R^{2k}$ the number of intersection points of any $k$-simplex formed by $k + 1$ of the given points with the $k$-faces of the simplex formed by the remaining $k + 2$ points is finite (since the $k$-simplex intersects any $k$-face in at most one point). Hence any $k$-simplex intersects the boundary of the “complementary” $(k + 1)$-simplex at a finite number of points.
\end{remark*}

\begin{proof}[Proof of Remark~\ref{a:extra} modulo Lemma~\ref{corr}]
Denote the given $(2k + 3)$-element set of points by $M$. By Lemma~\ref{corr} the required number equals
\[
    \sum_{\substack{I \in {M \choose k + 1}}} |\conv{I} \cap \partial{\conv{(M \setminus I)}}| = \sum_{\substack{I \in {M \choose k + 1}}}\sum_{\substack{J \in {M \setminus I \choose k + 1}}} |\conv{I} \cap \conv{J}| =
    \sum_{\substack{(I, J) \in {M \choose k + 1}^2 \\ I \cap J = \varnothing}} |\conv{I} \cap \conv{J}| = 0~(mod~2).
\]
The last equality holds since for any unordered pair of disjoint $(k + 1)$-element subsets $I, J \subset M$ there are exactly two terms $|\conv{I} \cap \conv{J}|$ in the sum.
\end{proof}
Now we turn to the proof of Assertion~\ref{a:main}.
\begin{lemma}\label{l:mcl}
    Any $(d - 1)$-dimensional hyperplane intersects the image of the moment curve in $\R^d$ in at most $d$ points.
\end{lemma}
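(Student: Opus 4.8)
The plan is to reduce this geometric statement to a fact about roots of a univariate polynomial. First I would parametrise an arbitrary $(d-1)$-dimensional hyperplane $H \subset \R^d$ by a linear equation $a_1 x_1 + \cdots + a_d x_d = c$ with coefficient vector $(a_1, \ldots, a_d) \neq 0$. The non-degeneracy of $H$ (it being a genuine hyperplane, rather than all of $\R^d$ or the empty set) is exactly what guarantees that this vector is nonzero, and this is the only place where that hypothesis enters.

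Next I would substitute the coordinates of $m(t) = (t, t^2, \ldots, t^d)$ into this equation. The point $m(t)$ lies on $H$ precisely when $a_d t^d + a_{d-1} t^{d-1} + \cdots + a_1 t - c = 0$, that is, when $t$ is a root of the polynomial $P(t) := a_d t^d + \cdots + a_1 t - c$. Since $(a_1, \ldots, a_d) \neq 0$, at least one coefficient of a positive power of $t$ is nonzero, so $P$ is not the zero polynomial, and its degree is at most $d$.

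The key step is then the standard fact that a nonzero polynomial of degree at most $d$ has at most $d$ roots. Combining this with the injectivity of $m$ (distinct parameters $t$ yield distinct points, as is already visible in the first coordinate), I would conclude that $H \cap m(\R)$ consists of at most $d$ points, which is the claim.

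I expect no serious obstacle here; the argument is essentially a one-line translation once the hyperplane is written in coordinate form. The only point requiring genuine care is verifying that $P$ is not identically zero, which is precisely where the assumption that $H$ is a bona fide hyperplane (forcing $(a_1, \ldots, a_d) \neq 0$) must be invoked, rather than any deeper property of the moment curve itself.
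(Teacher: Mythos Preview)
Your proof is correct and follows essentially the same route as the paper: write the hyperplane as $a_1x_1+\cdots+a_dx_d=c$, substitute $x_i=t^i$, and bound the number of roots of the resulting polynomial by $d$. You are slightly more explicit than the paper in checking that the polynomial is not identically zero and in invoking the injectivity of $m$, but the core argument is the same.
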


\begin{proof}[Proof]
Take any $(d-1)$-dimensional hyperplane $\pi$ given by the equation $n_1 x_1 + \ldots + n_d x_d - m = 0$. 
A point $(x, x^2, \ldots, x^d)$ lies in $\pi$ if and only if $n_1 x + n_2 x^2 + \ldots + n_d x^d - m = 0$.
This equation has at most $d$ roots.
Hence $\pi$ intersects the image of the moment curve in at most $d$ points.
\end{proof}

Two $p$-element subsets $\{s_1, s_2, \ldots, s_p\}, \{t_1, t_2, \ldots, t_p\}\subset\R$ \emph{alternate} if their elements  can be ordered so that either $s_1 < t_1 < s_2 < t_2 < \ldots < s_{p} < t_{p}$ or $t_1 < s_1 < t_2 < s_2 < \ldots < t_{p} < s_{p}$.

\begin{lemma}\label{l:mcsi} 
Suppose $P, Q \in {\R \choose k + 1}$ are disjoint. Then

(a) the simplices $\conv{m(P)}$ and $\conv{m(Q)}$ intersect if and only if $P$ and $Q$ alternate; and

(b) if the simplices $\conv{m(P)}$ and $\conv{m(Q)}$ intersect, they intersect at one point.
\end{lemma}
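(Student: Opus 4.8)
The plan is to reduce both parts to the structure of the \emph{unique} affine dependence among the $2k+2 = d+2$ points of $m(P) \cup m(Q)$, where $d = 2k$. First I would record that these $d+2$ points are in general position: any $d+1$ of them, say $m(t_0),\ldots,m(t_d)$, are affinely independent, since the associated $(d+1)\times(d+1)$ matrix $\bigl(t_i^{\,j}\bigr)_{0 \le j \le d}$ is a Vandermonde matrix on distinct nodes and hence nonsingular (this is the same computation underlying Lemma~\ref{l:mcl}). Consequently the $(d+1)\times(d+2)$ matrix $V$ whose $i$-th column is $(1,t_i,t_i^2,\ldots,t_i^d)$ has rank $d+1$, so its kernel is one-dimensional: there is an affine dependence $\sum_i \lambda_i\, m(t_i)=0$ with $\sum_i \lambda_i = 0$, and it is unique up to a scalar.

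Next I would compute the signs of this dependence. Ordering the $d+2$ parameters increasingly as $t_0 < t_1 < \cdots < t_{d+1}$, the cofactor (Cramer) formula for the kernel gives $\lambda_i = (-1)^i \det V_{\hat{\imath}}$, where $V_{\hat{\imath}}$ is the Vandermonde matrix on the remaining nodes; each such determinant is strictly positive, so the signs strictly alternate, $\operatorname{sign}\lambda_i = (-1)^i$. In particular all $\lambda_i$ are nonzero, and the unique Radon partition of $m(P)\cup m(Q)$ separates the even-indexed points from the odd-indexed points in this order.

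Then I would translate intersection into this sign pattern. The simplices meet iff there are convex combinations with $\sum_{p\in P}\alpha_p\, m(p) = \sum_{q\in Q}\beta_q\, m(q)$, $\sum_p\alpha_p=\sum_q\beta_q=1$; rearranged, this is a nonzero affine dependence that is nonnegative on $m(P)$ and nonpositive on $m(Q)$. By uniqueness it must be a scalar multiple $c\,(\lambda_i)$ with $c\neq 0$, whose coefficients are all nonzero with strictly alternating signs. Hence such combinations exist iff $P$ is exactly the set of positions of one sign and $Q$ the positions of the other — that is, iff $P$ and $Q$ occupy alternating positions in the sorted order, which is precisely the alternation condition; this proves (a). For the forward (``only if'') direction the coefficients are forced to be nonzero, so in fact $\alpha_p,\beta_q>0$ and the common point lies in the relative interiors. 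For (b), when the simplices meet the common point is obtained by normalizing the essentially unique dependence $(\lambda_i)$ separately over $P$ and over $Q$, so it is determined uniquely; alternatively (b) follows from Lemma~\ref{corr} applied to $M=m(P)$, $N=m(Q)$, since $\dim\conv{m(P)}=k$ and $\dim\conv{m(Q)}=k=d-k$.

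The main obstacle is the ``only if'' direction of (a): showing that a genuine intersection forces the Radon partition to coincide with $(P,Q)$. This rests entirely on the uniqueness of the affine dependence together with the sign computation, so the delicate point is to establish cleanly that every nontrivial affine dependence among these $d+2$ moment-curve points is a scalar multiple of the alternating one.
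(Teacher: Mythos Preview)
Your proof is correct and takes a different route from the paper's. You work algebraically: the $2k+2$ moment-curve points admit a unique (up to scalar) affine dependence, whose coefficients you show alternate in sign via signed Vandermonde minors, so the unique Radon partition is exactly the even/odd split and both (a) and (b) follow. The paper instead argues geometrically: when $P$ and $Q$ do not alternate it constructs an explicit separating hyperplane through the midpoints of the colour-changing arcs of the moment curve (padded with far-left points), and when they do alternate it uses Lemma~\ref{l:mcl} to show that any hyperplane misses at least one of the $2k+1$ colour-changing arcs and hence cannot separate. Your approach is essentially that of the cited reference~\cite{Br}; it is shorter and additionally yields that the common point lies in the relative interiors of both simplices, while the paper's hyperplane argument is more visual and avoids explicit linear algebra. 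For (b) both proofs ultimately invoke Lemma~\ref{corr}, though you also derive uniqueness directly from the one-dimensional kernel. Your closing ``main obstacle'' is not actually delicate: once the Vandermonde rank computation shows the kernel is one-dimensional, every nontrivial affine dependence is automatically a scalar multiple of the alternating one.
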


\begin{lemma}\label{a:comb}
    For any subset $I \in {[2k + 3] \choose k + 1}$ there is an even number of subsets $J \in {[2k + 3] \setminus I \choose k + 1}$ alternating with $I$.
\end{lemma}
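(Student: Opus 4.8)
The plan is to translate the statement into a problem about binary words and then analyze it through the block structure of the word. Arrange the points $1 < 2 < \dots < 2k+3$ in increasing order and record, for each one, whether it lies in $I$ or in the complement $C := [2k+3]\setminus I$. This produces a word $w$ of length $2k+3$ over the two-letter alphabet $\{I,C\}$, having exactly $k+1$ letters $I$ and $k+2$ letters $C$. A subset $J\in{[2k+3]\setminus I\choose k+1}$ is exactly the choice of a single letter $c\in C$ to discard (namely $J = C\setminus\{c\}$), and $I$ alternates with $J$ precisely when deleting the letter $c$ from $w$ leaves a strictly alternating word. Since that word has $k+1$ letters of each kind, alternating means it equals $(IC)^{k+1}$ or $(CI)^{k+1}$. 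So the lemma reduces to: \emph{the number of letters $C$ of $w$ whose deletion makes $w$ alternating is even.}

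First I would dispose of an easy case: if two letters $I$ are adjacent in $w$, then deleting a single $C$ cannot separate them, so no deletion works and the count is $0$. Hence assume the $k+1$ letters $I$ are pairwise non-adjacent. They cut the letters $C$ into $k$ \emph{internal blocks} (one strictly between each pair of consecutive $I$'s, each necessarily nonempty) and two \emph{end blocks} (before the first $I$ and after the last $I$, possibly empty). Let $t$ denote the total number of $C$'s in the two end blocks. Since the $k$ internal blocks already account for at least $k$ of the $k+2$ available $C$'s, we get $t\le 2$; equivalently the internal blocks carry a total ``excess'' of $2-t$ letters beyond one per block.

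The core step is to match $w$ against the two targets. After deleting one $C$, alternation forces every internal block to have size exactly $1$ and the two end blocks to have sizes $\{0,1\}$. Comparing this target profile with $w$, I would run the three cases $t\in\{0,1,2\}$. For $t=2$ all internal blocks already have size $1$, and exactly the end-block letters are deletable (one checks each of $(\ell,r)\in\{(2,0),(1,1),(0,2)\}$ gives $2$ such letters). For $t=1$ there is a unique internal block of size $2$, and exactly its two letters are deletable, while deleting an end letter leaves that size-$2$ block intact and hence non-alternating; again $2$. For $t=0$ the end blocks are empty and stay empty under deletion, so the required size-$1$ end block can never be created and the count is $0$. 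In every case the number is $0$ or $2$, hence even, which completes the argument. The main obstacle is purely this bookkeeping: faithfully converting ``alternating'' into the target block-size profile and ensuring the enumeration of deletions hitting a target is exhaustive, in particular keeping end blocks and internal blocks distinct and using $t\le 2$ to bound the analysis. The preliminary ``adjacent $I$'s $\Rightarrow$ count $0$'' observation is what renders the remaining casework finite and routine.
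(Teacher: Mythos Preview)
Your proof is correct and follows essentially the same approach as the paper: first dispose of the case where $I$ has two adjacent elements, then use the block decomposition of the complement and a short case analysis. The only cosmetic difference is that the paper indexes its cases by which of the endpoints $1,\,2k+3$ lie in $I$ (and phrases the count in case~(ii) as the product $|J_1|\cdots|J_{k+1}|$), whereas you index by the total end-block size $t\in\{0,1,2\}$ and phrase the count as the number of deletable $C$'s; these parametrizations are equivalent and lead to the same $0$-or-$2$ answer.
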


\begin{proof}[Proof of Assertion~\ref{a:main} modulo Lemmas~\ref{l:mcsi},~\ref{a:comb}]
Let us prove that the set $m([2k+3])$ is the required one. The points from this set are in general position by Lemma~\ref{l:mcl}.

Take any $I \in {\R^d \choose k + 1}$.
Then the following numbers are equal:

$\bullet$ the number $n_1$ of intersection points of $\conv{m(I)}$ with the boundary of the simplex $\conv{m([2k + 3] \setminus I)}$;

$\bullet$ the number $n_2$ of $k$-faces of $\conv{m([2k + 3] \setminus I)}$ intersecting $\conv{m(I)}$;

$\bullet$ the number $n_3$ of subsets $J \in {[2k + 3] \setminus I \choose k + 1}$ such that $\conv{m(I)}$ and $\conv{m(J)}$ intersect;

$\bullet$ the number $n_4$ of subsets $J \in {[2k + 3] \setminus I \choose k + 1}$ alternating with $I$.

We have $n_1 = n_2$ since any $k$-face of $\conv{m([2k + 3] \setminus I)}$ intersects $\conv{m(I)}$ at at most $1$ point by Lemma~\ref{corr}.
Also $n_2 = n_3$ since any $k$-face of $\conv{m([2k + 3] \setminus I)}$ is a set $\conv{m(J)}$ for some $J \in {[2k + 3] \setminus I \choose k + 1}$ and any set $\conv{m(J)}$ for $J \in {[2k + 3] \setminus I \choose k + 1}$ is a $k$-face of $\conv{m([2k + 3] \setminus I)}$.
By Lemma~\ref{l:mcsi} we obtain $n_3 = n_4$. By Lemma~\ref{a:comb} $n_4$ is even. Hence $n_1$ is also even.
\end{proof}

\begin{proof}[Proof of Lemma~\ref{l:mcsi}.a] 
Take reals $x_1< x_2< \ldots < x_{2k+2}$ such that $P \cup Q = \{x_1, x_2, \ldots, x_{2k+2}\}$.
The \emph{segment} of the moment curve with the endpoints $x_i$ and $x_{i + 1}$ is $m((x_i, x_{i + 1}))$ for $i \in [2k + 2]$. 
The \emph{midpoint} of such segment is  $m(\frac{x_i + x_{i + 1}}{2})$.
We call the points from $P$ black, and the points from $Q$ white.

Suppose $P$ and $Q$ do not alternate.
Denote by $t$ the number of segments of the moment curve with the endpoints of different colors. 
Then $t\le 2k$. 
Construct a hyperplane $\pi$ containing all the $t$ midpoints of these segments and $2d - t$ points $m(y)$ for $y$ much smaller than $x_1$. 
Since every segment of the moment curve intersecting $\pi$ has endpoints of different colors, $\pi$ separates $\conv{m(P)}$ and $\conv{m(Q)}$. 
Then $\conv{m(P)}$ and $\conv{m(Q)}$ do not intersect.

Suppose $P$ and $Q$ alternate. Take any hyperplane $\pi$ such that $\pi \cap m(P \cup Q) = \varnothing$.
By Lemma~\ref{l:mcl} the hyperplane $\pi$ intersects at most $2k$ segments of the moment curve.
Since $P$ and $Q$ alternate, there are $2k + 1$ segments of the moment curve with the endpoints of different colors. Thus the endpoints of some of these segment lie in the same half-space from $\pi$. This implies that $\pi$ does not separate $\conv{m(P)}$ and $\conv{m(Q)}$. Then $\conv{m(P)}$ and $\conv{m(Q)}$ do not have a separating hyperplane, which means that these sets intersect.
\end{proof}

\begin{proof}[Proof of Lemma~\ref{l:mcsi}.b]
Apply Lemma~\ref{corr} to $d = 2k$, $M := m(P)$, $N := m(Q)$.
\end{proof}

\begin{proof}[Proof of Lemma~\ref{a:comb}]
Suppose that $I$ contains adjacent numbers, i.e., there is $t \in [2k + 2]$ such that $\{t, t + 1\} \subset I$.
Then no subset $J \in {[2k + 3] \choose k + 1}$ alternates with $I$.
Thus the required number of subsets is even.
In the sequel we additionally assume that $I$ contains no adjacent numbers.

We consider four cases:
\emph{(i)} $1, 2k + 3 \in I$;
\emph{(ii)} $1 \in I, 2k + 3 \notin I$; 
\emph{(iii)} $1 \notin I, 2k + 3 \in I$;
\emph{(iv)} $1, 2k + 3 \notin I$.\\
\emph{{Case (i):}}
Since both outer numbers are in $I$, no subset $J \in {[2k + 3] \choose k + 1}$ alternates with $I$.
Then the required number of subsets is even. \\
\emph{Case (ii):}
Since $I$ contains no adjacent numbers and only one outer number, the set $[2k + 3] \setminus I$ can be uniquely represented as a union $J_1 \sqcup J_2 \sqcup \ldots \sqcup J_{k + 1}$, where $J_i$ are nonempty sets of consecutive integers or single integers. Since $|J_1| + |J_2| + \ldots + |J_{k + 1}| = 2k + 3 - (k + 1) = k + 2$, among $J_i$ there exists a unique $2$-element set and the others have only one element.
Thus the number of subsets $J \in {[2k + 3] \choose k + 1}$ alternating with $I$ is exactly
$|J_1| \cdot |J_2| \cdot \ldots \cdot |J_{k + 1}| = 1 \cdot \ldots \cdot 1 \cdot 2 \cdot 1 \cdot \ldots \cdot 1 = 2$, which is even.\\
\emph{Case (iii):}
Analogously to Case (ii).\\
\emph{Case (iv):}
Since $I$ contains neither adjacent numbers nor outer ones, $I = \{2, 4, \ldots, 2k + 2\}$. Then the only $(k + 1)$-element subsets of $[2k + 3]$ alternating with $I$ are $\{1, 3, 5, \ldots, 2k + 1\}$ and $\{3, 5, 7, \ldots, 2k + 3\}$.
Thus the number of subsets $J \in {[2k + 3] \choose k + 1}$ alternating with $I$ equals $2$, which is even.
\end{proof}

\end{document}